\newtheorem{thm}{Theorem}
\newtheorem{cor}{Corollary}
\newtheorem{lem}{Lemma}
\newtheorem{prop}{Proposition}
\newcommand{\x}{\chi}
\newcommand{\om}{\omega}
\renewcommand{\a} {\alpha}
\newcommand{\vf}{\varphi}
\renewcommand\L{\mathrm{L\hspace{.5pt}}}
\renewcommand\S{\mathrm{S\hspace{.1pt}}}
\newcommand\U{\mathrm{U}}
\newcommand\A{\mathrm{A}}
\newcommand\M{\mathrm{M}}
\newcommand{\sd}{\leftthreetimes}
\newcommand{\FF}{\mathbb{F}}
\newcommand\diag{\mathrm{diag}}
\renewcommand\Im{\mathrm{Im\hspace{2pt}}}
\newcommand{\be} {\begin{equation}}
\newcommand{\ee} {\end{equation}}
\newcommand{\ba}[1] {\begin{array}{#1}}
\newcommand{\ea} {\end{array}}
\newcommand{\la} {\langle}
\newcommand{\ra} {\rangle}
\begin{document}
\renewcommand{\refname}{References}
\thispagestyle{empty}

\title{A solvable group isospectral to $\S_4(3)$}
\author{{Andrei V. Zavarnitsine}}%
\address{Andrei V. Zavarnitsine
\newline\hphantom{iii} Group Theory Lab.
\newline\hphantom{iii} Sobolev Institute of Mathematics,
\newline\hphantom{iii} Acad. Koptyug avenue, 4,
\newline\hphantom{iii} 630090, Novosibirsk, Russia
\newline\hphantom{iii} \sc{and}
\newline\hphantom{iii} Mechanics and Mathematics Dept.
\newline\hphantom{iii} Novosibirsk State University,
\newline\hphantom{iii} Pirogova St., 2,
\newline\hphantom{iii} 630090, Novosibirsk, Russia}

\email{zav@math.nsc.ru}%

\thanks{\rm Supported by RFBR, grants 06-01-39001 and 08-01-00322;
by the Council of the President (project NSh-344.2008.1); by the Russian Science Support Foundation (grant of
the year 2008); by SB RAS, Integration Project 2006.1.2; by ADTP ``Development of the Scientific
Potential of Higher School'' of the Russian Federal Agency for Education (Grant 2.1.1.419). }

\maketitle
{\small
\begin{quote}
\noindent{\sc Abstract. }
We construct a solvable group $G$ of order 5\,648\,590\,729\,620 such that the set of element orders of $G$ coincides
with that of the simple group $\S_4(3)$. This completes the determination of finite simple groups isospectral
to solvable groups.
%
%
%
\end{quote}
}

\section{Introduction}

The set of elements orders of a finite group $G$ is denoted by $\om(G)$ and called the {\em spectrum} of $G$.
The spectrum $\om(G)$ contains all divisors of each of its elements, hence is uniquely determined by the set
$\mu(G)$ of its maximal with respect to divisibility elements. Finite groups $G$ and $H$ are {\em isospectral}
if $\om(G)=\om(H)$.

It was shown in \cite{lm} that if a nonabelian simple group $G$ is isospectral to a solvable group then $G\cong
\L_3(3),\U_3(3),\S_4(3),\A_{10}$. The existence of solvable groups isospectral to $\L_3(3)$ and
$\U_3(3)$ was proved in \cite{m02,al}. In \cite{st}, it was shown that there exists no solvable group
isospectral to $\A_{10}$. Thus, the problem remained open only for $\S_4(3)$.
The main result of this paper is as follows:

\begin{thm}\label{main} There exists a solvable group isospectral to the simple group $\S_4(3)$.
\end{thm}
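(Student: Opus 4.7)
The plan is to construct $G$ explicitly as a semidirect product $P \rtimes C$, where $C$ is a small complement controlling the element orders coprime to $3$, and $P$ is a $3$-group engineered to contribute the remaining element orders and to interact with $C$ in just the right way. A direct check gives $\mu(\S_4(3)) = \{5,9,12\}$, so $\om(\S_4(3)) = \{1,2,3,4,5,6,9,12\}$, with the primes dividing $|\S_4(3)|$ being $\{2,3,5\}$. The composite orders that must be avoided are $8,\,10,\,15,\,18,\,20,\,24,\,36,\,45$, together with anything involving a prime $p \notin \{2,3,5\}$.

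The announced order factors as $5\,648\,590\,729\,620 = 2^2\cd 3^{24}\cd 5$, which naturally suggests taking $C = \ZZ_5 \rtimes \ZZ_4$, the Frobenius group of order $20$ (so $\om(C) = \{1,2,4,5\}$, eliminating $10$ and $20$ at once), together with a $3$-group $P$ of order $3^{24}$. I would build $P$ as an iterated extension of irreducible $\FF_3[C]$-modules, selected so that the order-$5$ element of $C$ acts fixed-point-freely on $P$ (killing $15$ and $45$), so that the involution and the order-$4$ elements of $C$ act on $P$ with centralizers of exponent $3$ (killing $18$, $24$ and $36$), and so that a non-split stage in the construction produces elements of order $9$ in $P$ while keeping $P$ of exponent $9$. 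To realize $12 \in \om(G)$ one further requires some order-$4$ element of $C$ to fix a non-trivial $3$-element of $P$.

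The hard step is the simultaneous realization of an element of order $9$ inside $P$ together with all of the centralizer-exponent conditions above: the module producing order-$9$ elements interacts with the $C$-action, so the $\FF_3[C]$-structure and the extension class forcing exponent $9$ have to be mutually compatible. Once $P$ is in hand, verifying $\om(G) = \om(\S_4(3))$ is a finite computation, because every element of $G$ can be written $px$ with $p \in P$, $x \in C$, and $|px|$ is determined by $|x|$ together with the action of $\la x\ra$ on the $\la x\ra$-invariant closure of $\la p\ra$ in $P$. Going through the four possibilities $|x|\in\{1,2,4,5\}$ and applying the centralizer conditions yields the inclusion $\om(G)\subseteq \om(\S_4(3))$, while exhibiting explicit elements of orders $5,\,6,\,9,\,12$ gives equality. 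Solvability of $G$ is immediate from the construction, both $P$ and $C$ being solvable.
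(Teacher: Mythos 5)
Your blueprint coincides with the paper's strategy: the paper also takes $G=P\sd F$ with $F$ the Frobenius group $5\!:\!4$ of order $20$ and $P$ a $3$-group of order $3^{24}$ and exponent $9$, arranges that the order-$5$ element acts fixed-point freely on $P$ (killing $15$ and $45$), that the centralizer in $P$ of the involution contains no elements of order $9$ (killing $18$, and hence $36$), and exhibits elements of orders $9$ and $12$. So the list of conditions you impose is exactly the right one.

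However, your text is a plan rather than a proof: the entire mathematical content of the theorem is the existence of a $3$-group $P$ with an $F$-action satisfying all of these conditions simultaneously, and you explicitly defer this (``the hard step is the simultaneous realization \dots''; ``once $P$ is in hand \dots''). Nothing in the proposal shows that the required extension of $\FF_3F$-modules exists, and the tension is real: a fixed-point-free action of the order-$5$ element forces every $F$-composition factor of $P$ to be the $4$-dimensional simple module $V$, while you still need non-commutativity deep enough to create elements of order $9$, yet shallow enough that none of them is centralized by an involution, and with some order-$3$ element centralized by an order-$4$ element. The paper resolves this by an explicit $17\times17$ matrix construction over $\FF_3$ with generators $A,B,C,D$: a lemma on unitriangular $5\times5$ matrices over a ring of characteristic $3$ gives an exact criterion for an element of $P$ to have order $9$ (whence exponent $9$ and the existence of order-$9$ elements); a computation with $\wedge^2V$ identifies the derived subgroup of $\la C\ra^F$ with $V$, so all $F$-composition factors are copies of $V$ and $15\notin\om(G)$; and a final explicit calculation shows that the relevant quadratic map $\psi$ takes values left-divisible by $1-b^2$, which annihilates $C_V(b^2)$, so no element of $C_P(B^2)$ satisfies the order-$9$ criterion and $18\notin\om(G)$. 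Without supplying such a construction and these verifications (or some substitute, e.g.\ a computer check of a concrete group), your argument does not establish existence, so there is a genuine gap.
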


The solvable group from Theorem \ref{main} is constructed as a $17\!\times\! 17$-matrix  group over the field
$\FF_3$ of three elements. This group has order
$$5\,648\,590\,729\,620 = 2^2\cdot3^{24}\cdot 5$$
and is believed to be the smallest
solvable group in question. Although the proofs given in this paper do not depend on computer calculations, the
first evidence for the existence of this group was obtained with the aid of the computer algebra system GAP
\cite{gap}.

As a consequence of Theorem \ref{main}, we have

\begin{cor}\label{cm} A finite simple nonabelian group $G$ is isospectral to a solvable group if and only if
$G\cong \L_3(3),\U_3(3),\S_4(3)$.
\end{cor}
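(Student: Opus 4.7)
The plan is to assemble Corollary \ref{cm} directly from the results already cited in the introduction together with Theorem \ref{main}, since the statement is essentially a bookkeeping combination of the known classification-type results on solvable isospectrality of simple groups.

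For the forward direction, I would invoke the result of \cite{lm}, which asserts that if a finite nonabelian simple group $G$ is isospectral to a solvable group, then $G$ is isomorphic to one of $\L_3(3)$, $\U_3(3)$, $\S_4(3)$, or $\A_{10}$. To eliminate the last candidate, I would appeal to \cite{st}, where it is proved that no solvable group has spectrum equal to $\om(\A_{10})$. This leaves exactly the three groups listed in the corollary as possible isospectral partners of solvable groups.

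For the reverse direction, I would cite the explicit constructions of solvable groups isospectral to $\L_3(3)$ and to $\U_3(3)$ given in \cite{m02} and \cite{al} respectively, and then appeal to Theorem \ref{main} of the present paper, which supplies a solvable group isospectral to $\S_4(3)$. Concatenating these three existence statements with the previous paragraph yields both implications of the equivalence.

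There is no real obstacle here: the proof is a two-sentence deduction, and all the substance lives in the externally cited theorems and in Theorem \ref{main}. The only care needed is to make clear that the paper's contribution (Theorem \ref{main}) is exactly the missing ingredient of the corollary, so that the classification of nonabelian simple groups admitting a solvable isospectral partner is genuinely completed once Theorem \ref{main} is established.
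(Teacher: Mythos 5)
Your proposal is correct and matches the paper's (implicit) argument exactly: the forward direction follows from \cite{lm} together with the exclusion of $\A_{10}$ in \cite{st}, and the reverse direction from \cite{m02}, \cite{al}, and Theorem \ref{main}. This is precisely how the paper derives Corollary \ref{cm} as a consequence of Theorem \ref{main}.
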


\section{Preliminaries}

From \cite{atl}, it follows that
\be
\mu(\S_4(3))=\{5,9,12\}.
\ee

We will require the following auxiliary result:
\begin{lem}\label{x3}
Let
$$
X=
\left(
  \begin{array}{ccccc}
    1 & x_1 & y_1 & z_1 & t_1 \\
    . & 1 & x_2 & y_2 & z_2 \\
    . & . & 1 & x_3 & y_3 \\
    . & . & . & 1 & x_4 \\
    . & . & . & . & 1 \\
  \end{array}
\right)
$$
be an upper unitriangular $5\!\times\!5$-matrix over a (not necessarily commutative) ring
of characteristic $3$, where the dots stand for zeros. Then $X^9=1$ and $|X|<9$ if and only if
\be\label{9e}
\begin{array}{lr}
x_1x_2x_3=0,&x_2x_3x_4=0,\\[3pt]
\multicolumn{2}{c}{x_1x_2y_3+x_1y_2x_4+y_1x_3x_4=0.}
\end{array}
\ee
\end{lem}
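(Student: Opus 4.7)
\medskip\noindent\textbf{Proof plan for Lemma \ref{x3}.}

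The plan is to reduce the problem to a single matrix computation by writing $X=I+N$, where $N$ is the strictly upper triangular part. First I would observe that, although the coefficient ring is noncommutative, the matrices $I$ and $N$ commute, so the usual binomial expansion applies. Since the ring has characteristic $3$ and $I,N$ commute, I would compute
\[
X^3=(I+N)^3=I+3N+3N^2+N^3=I+N^3,
\]
and iterating,
\[
X^9=(I+N^3)^3=I+N^9.
\]
Since $N$ is a $5\!\times\!5$ strictly upper triangular matrix, $N^5=0$, so $N^9=0$ and therefore $X^9=1$ holds automatically. Hence the two conditions $X^9=1$ and $|X|<9$ together are equivalent to $X^3=1$, i.e.\ to $N^3=0$.

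The heart of the proof is then the entrywise computation of $N^3$. I would use the formula
\[
(N^3)_{i,j}=\sum_{i<k<\ell<j}N_{i,k}\,N_{k,\ell}\,N_{\ell,j},
\]
keeping the factors in the written order since the ring is noncommutative. Because $N$ is strictly upper triangular of size $5$, the only entries of $N^3$ that can be nonzero are those at positions $(1,4)$, $(1,5)$, and $(2,5)$. A direct reading off these positions yields
\[
(N^3)_{1,4}=x_1x_2x_3,\qquad (N^3)_{2,5}=x_2x_3x_4,
\]
and, summing over the three admissible pairs $(k,\ell)\in\{(2,3),(2,4),(3,4)\}$,
\[
(N^3)_{1,5}=x_1x_2y_3+x_1y_2x_4+y_1x_3x_4.
\]
Equating these three expressions to zero gives exactly the system \eqref{9e}, which completes the equivalence.

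The main obstacle is essentially bookkeeping: one must check that $I$ and $N$ commute (so that the char-$3$ Frobenius-type identity $X^{3^k}=I+N^{3^k}$ is valid over a noncommutative ring) and that the factors in each triple product $N_{i,k}N_{k,\ell}N_{\ell,j}$ are written in the correct order, matching the noncommutative entries appearing in \eqref{9e}. No genuine computational difficulty arises beyond this.
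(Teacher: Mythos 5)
Your proposal is correct and follows essentially the same route as the paper: everything reduces to computing $X^3$, whose only possibly nonzero off-diagonal entries are at positions $(1,4)$, $(1,5)$, $(2,5)$ and equal exactly the three expressions in \eqref{9e}, after which the claim is immediate. Your organization via $X=I+N$, the characteristic-$3$ binomial identity $X^3=I+N^3$, and the entrywise formula for $N^3$ is just a tidier way of presenting the paper's ``direct calculation,'' and all noncommutativity issues are handled correctly.
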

\begin{proof} A direct calculation shows that
$$
X^3=
\left(
  \begin{array}{ccccc}
    1 & . & . & z_1' & t_1' \\
    . & 1 & . & . & z_2' \\
    . & . & 1 & . & . \\
    . & . & . & 1 & . \\
    . & . & . & . & 1 \\
  \end{array}
\right),
$$
where
$$
\begin{array}{lr}
z_1'=x_1x_2x_3,& z_2'=x_2x_3x_4,\\[3pt]
\multicolumn{2}{c}{t_1'=x_1x_2y_3+x_1y_2x_4+y_1x_3x_4.}
\end{array}
$$
The claim follows.
\end{proof}

Let $G$ be a group, $K$ a field, and let $U_1,U_2,U_3$ be right $KG$-modules. Recall that a  $K$-bilinear
map $\vf:U_1\times U_2\to U_3$ is  called {\em balanced \/} if
$$\vf(u_1g,u_2g)=\vf(u_1,u_2)g$$
for all $u_1\in U_1$, $u_2\in U_2$, $g\in G$. It is a well-known universal property of the tensor
product of $KG$-modules that every balanced map factors through '$\otimes$', i.\,e. there exists a unique module
homomorphism $\tilde\vf: U_1\otimes U_2\to U_3$ such that the following diagram commutes
\be\label{diag}
\xymatrix{
&U_1\otimes U_2\ar@{.>}[dr]^{ \tilde{\vf} } &         \\
 U_1\times U_2\ar[rr]^{\vf}\ar[ur]^{\otimes}&         & U_3 }
\ee
Recall also that the exterior square $\wedge^2 U$ of a $KG$-module $U$ is the quotient of $U\otimes U$ by the
submodule generated by all elements of the form $u\otimes u$, $u\in U$.

Henceforth, all the introduced notation will be fixed throughout.

We define the following $4\!\times\!4$-matrices over $\FF_3$:
\be \label{rep}
a=\left(
        \begin{array}{rrrr}

.&1&.&.      \\
.&.&1&.      \\
.&.&.&1      \\
-1&-1&-1&-1  \\
        \end{array}
       \right),
   \qquad
b=\left(
        \begin{array}{rrrr}
 1& .& .& . \\
 .& .& 1& . \\
-1&-1&-1&-1 \\
 .& 1& .& . \\
        \end{array}
       \right),
\ee
Observe that
\be\label{az}
1+a+a^2+a^3+a^4=0.
\ee

Let $F=\la a, b \ra$.  Then $F$ is a Frobenius group of shape $5\!:\!4$ which is isomorphic to the abstract
group
\be
\la a,b\mid a^5=b^4=1, a^b=a^2 \ra,
\ee
and (\ref{rep}) is the matrix representation of
$F$ that corresponds to the simple right $\FF_3F$-module
\be\label{v}
V=\la v\mid vb=v,\ v(1+a+a^2+a^3+a^4)=0\ra
\ee
written in the basis $(v,va,va^2,va^3)$. Observe that $\mu(V\sd F)=\{5,12\}$. In particular,
$a$ acts on $V$ fixed-point freely. Moreover,
\be\label{cb2}
C_V(b^2)=\la v, va^2+va^3\ra.
\ee

The following identities hold in $F$:
\be\label{rels}
\begin{array}{lll}
b^a=ba^4=a^2b, &(b^2)^a=b^2a^2=a^3b^2, & (b^3)^a=b^3a^3=ab^3, \\[3pt]
b^{a^2}=ba^3=a^4b, & (b^2)^{a^2}=b^2a^4=ab^2, &  (b^3)^{a^2}=b^3a=a^2b^3, \\[3pt]
b^{a^3}=ba^2=ab, &  (b^2)^{a^3}=b^2a=a^4b^2, & (b^3)^{a^3}=b^3a^4=a^3b^3, \\[3pt]
b^{a^4}=ba=a^3b, & (b^2)^{a^4}=b^2a^3=a^2b^2, & (b^3)^{a^4}=b^3a^2=a^4b^3.
\end{array}
\ee

Denote by $M$ the right $\FF_3F$-module $\M_4(\FF_3)$ on which the elements of $F$ act by conjugation, i. e.
$m\circ g=g^{-1}mg$ for $m\in M$, $g\in F$.

\begin{lem}\label{mac} Let $m\in M$. The map $v\mapsto m$ can be extended to an $\FF_3F$-module
homomorphism $V\to M$ if $m\in \la b,b^2,b^3\ra_{\FF_3^{\vphantom{A^A}}}$.
\end{lem}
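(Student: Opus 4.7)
The plan is to use the presentation of $V$ given in (\ref{v}) together with the universal property of a module defined by generators and relations: a map $v\mapsto m$ extends to an $\FF_3F$-homomorphism $V\to M$ if and only if $m$ satisfies, under the $F$-action on $M$, the two defining relations of $V$, namely
\[
m\circ b=m \qquad\text{and}\qquad m\circ (1+a+a^2+a^3+a^4)=0.
\]
By $\FF_3$-linearity it suffices to verify these for $m=b^j$ with $j=1,2,3$.

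The first relation is immediate: $b^j\circ b = b^{-1}b^jb = b^j$, because powers of $b$ commute. So the content of the lemma lies in checking the second relation. Here I would compute directly using the conjugation table (\ref{rels}). For $m=b$, the sum $\sum_{i=0}^{4} b\circ a^i = \sum_{i=0}^{4} b^{a^i}$ reads, by (\ref{rels}), as $b+a^2b+a^4b+ab+a^3b=(1+a+a^2+a^3+a^4)b$, which vanishes by (\ref{az}). For $m=b^2$ and $m=b^3$ the same mechanism works: each row of (\ref{rels}) rewrites $(b^j)^{a^i}$ in the form $a^{k}b^j$ with $k$ running through a permutation of $\{0,1,2,3,4\}$ as $i$ does, so that $\sum_{i=0}^{4}(b^j)^{a^i}=(1+a+a^2+a^3+a^4)b^j=0$.

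The main (and only) non-trivial point is thus to observe that the second and third columns of (\ref{rels}) provide, for each $j\in\{1,2,3\}$, precisely the identity $\sum_i (b^j)^{a^i} = (\sum_i a^i)\,b^j$, after which (\ref{az}) finishes the job. I expect no real obstacle: once the verification is set up via the presentation (\ref{v}), everything reduces to inspection of the already-computed conjugation formulas.

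Finally, I would conclude by invoking the universal property of the presented module $V$: the assignment $v\mapsto m$ on the generator, being compatible with both defining relations, extends uniquely to an $\FF_3F$-module homomorphism $V\to M$, as required.
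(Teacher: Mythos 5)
Your proposal is correct and follows essentially the same route as the paper: reduce via the presentation (\ref{v}) of the cyclic module $V$ to checking $m\circ b=m$ and $m\circ(1+a+a^2+a^3+a^4)=0$, verify the latter only for $m=b,b^2,b^3$ by linearity, and conclude from (\ref{rels}) and (\ref{az}). The only cosmetic difference is that you collect the conjugates as $(\sum_i a^i)b^j$ while the paper writes them as $b^j(\sum_i a^i)$; both vanish by (\ref{az}).
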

\begin{proof} Since $V$ is a cyclic module, in view of (\ref{v}), it is
sufficient to show that
$$
m\circ b=m, \qquad m\circ (1+a+a^2+a^3+a^4)=0.
$$
The former holds, because $m$ is a linear combination of powers of $b$, hence commutes with $b$. The latter
need only be checked for $m$ equal to one of $b,b^2,b^3$ due to linearity. By (\ref{rels}), for $m=b$ we have
$$
m\circ (1+a+a^2+a^3+a^4)=b+b^a+b^{a^2}+b^{a^3}+b^{a^4}=b(1+a+a^2+a^3+a^4)=0
$$
due to (\ref{az}). Similarly, the claim follows for $m$ equal to $b^2$ or $b^3$.
\end{proof}

It is not difficult to show that the converse of Lemma \ref{mac} holds as well. However, we will not need this
fact.

\begin{lem}\label{wedge} There is an isomorphism of $\FF_3F$-modules
$$
\wedge^2 V \cong V \oplus U
$$
where $U$ is a $2$-dimensional submodule generated by $v\wedge va+v\wedge va^3+va^2\wedge va^3$.
\end{lem}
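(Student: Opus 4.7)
My plan has three parts: verify that $U$ is a $2$-dimensional $F$-submodule; use the primary decomposition of $a$'s action on $\wedge^2 V$ to produce a complementary $4$-dimensional $F$-submodule $V''$; and identify $V''$ with $V$.

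First I would check by direct computation that $wa = w$, relying only on $va^4 = -v - va - va^2 - va^3$ (a consequence of (\ref{az})) and the alternating property of $\wedge$. Since $ba = a^3 b$ by (\ref{rels}), it then follows that $(wb)a = w(a^3 b) = (wa^3)b = wb$ as well, so both $w$ and $wb$ lie in the $a$-fixed subspace of $\wedge^2 V$. A similar expansion yields $wb^2 = -w$, which shows that $w$ and $wb$ are $\FF_3$-linearly independent and that $U = \la w, wb\ra_{\FF_3}$ is closed under $b$. Hence $U$ is a $2$-dimensional $F$-submodule on which $a$ acts trivially and $b$ has order $4$.

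Next I would use the $\FF_3\la a\ra$-structure of $\wedge^2 V$. The cyclotomic polynomial $\Phi_5(x) = 1 + x + x^2 + x^3 + x^4$ is irreducible over $\FF_3$ (the order of $3$ modulo $5$ is $4$). Since $a$ has order $5$, it satisfies $x^5 - 1 = (x-1)\Phi_5(x)$ on $\wedge^2 V$, and because these two factors are coprime in $\FF_3[x]$ (note $\Phi_5(1) = 2 \ne 0$), the Chinese remainder theorem yields
\[
\wedge^2 V = \ker(a-1) \oplus \ker \Phi_5(a).
\]
Counting eigenvalues of $a$ on $\wedge^2 V$ over $\bar\FF_3$ --- they are the six products $\dz^i\dz^j$ for $1 \le i < j \le 4$, with $\dz$ a primitive fifth root of unity, forming the multiset $\{1, 1, \dz, \dz^2, \dz^3, \dz^4\}$ --- shows the summands have dimensions $2$ and $4$ respectively. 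The first equals $U$ by dimension. Setting $V'' := \ker \Phi_5(a)$, the element $1 + a + a^2 + a^3 + a^4$ is $b$-central in $\FF_3 F$ (since $b$ normalizes $\la a\ra$), so both summands are $F$-submodules.

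Finally, $V''$ carries an irreducible action of $\la a\ra$ with minimal polynomial $\Phi_5$, so it is an irreducible $\FF_3 F$-module of dimension $4$. Since $|F| = 20$ is coprime to $3$, $\FF_3 F$ is semisimple, and its Wedderburn decomposition has exactly four simple summands: two $\FF_3$-rational linear characters of $F/\la a\ra \cong \ZZ/4\ZZ$; one $2$-dimensional block from the Galois-conjugate pair of $\FF_9$-valued linear characters; and the unique faithful $4$-dimensional irreducible, namely $V$ itself. This forces $V'' \cong V$, giving $\wedge^2 V \cong V \oplus U$. The main technical obstacle is this last identification; a constructive alternative, closer in spirit to Lemma \ref{mac}, would be to exhibit a nonzero $b$-fixed $v' \in V''$ (the $b$-fixed subspace of $\wedge^2 V$ turns out to be one-dimensional and disjoint from $U$) and use the universal property (\ref{v}) to extend $v \mapsto v'$ to an $\FF_3 F$-homomorphism $V \to V''$, necessarily an isomorphism by simplicity of $V$ and dimension.
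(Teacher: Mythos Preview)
Your argument is correct. The verification that $U=\la w,wb\ra$ is a two-dimensional $F$-submodule is essentially the same computation the paper carries out (the paper names $u_1=w$ and $u_2=wb$ and records $u_1a=u_1$, $u_1b=u_2$, $u_2a=u_2$, $u_2b=-u_1$), but from that point on the two proofs diverge. The paper establishes the decomposition $\wedge^2 V\cong V\oplus U$ by lifting to characteristic zero: it reads off $\wedge^2\x_3=\x_3+\x_1+\x_2$ from the ordinary character table of $F$, which (since $3\nmid|F|$) transfers directly to $\FF_3F$-modules, and then only needs to locate the two-dimensional piece explicitly. You instead stay entirely in characteristic $3$, splitting $\wedge^2 V$ along the coprime factorisation $x^5-1=(x-1)\Phi_5(x)$ of the minimal polynomial of $a$, counting eigenvalues to get the dimensions $2$ and $4$, and then invoking the Wedderburn decomposition of $\FF_3F$ to pin down the four-dimensional summand as the unique faithful simple module $V$. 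Your route is a little longer but is self-contained over $\FF_3$ and makes the role of $\la a\ra$ transparent; the paper's route is quicker but tacitly uses the compatibility of ordinary and $3$-modular representation theory for groups of order prime to $3$.
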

\begin{proof}
We will need the following fragment  of the ordinary character table of $F$:
$$
  \begin{array}{c|crrrr}
                             &        1a & 4a & 4b & 2a & 5a \\
     \hline
    \x_1^{\vphantom{A^A}}    &  1 & i &-i &-1 & 1 \\
    \x_2                     &  1 &-i & i &-1 & 1\\
    \x_3                     &    4 & . & . & . & -1
  \end{array}
$$
The module $V$ corresponds to a representation of $F$ whose character is $\x_3$. A direct calculation shows
that $\wedge^2\x_3=\x_3+\x_2+\x_1$. Hence, we need only find a $2$-dimensional submodule of $\wedge^2 V$.
Denote
\begin{align*}
&u_1=v\wedge va+v\wedge va^3+va^2\wedge va^3, \\
&u_2=v\wedge va^2 +va\wedge va^2 +va \wedge va^3.
\end{align*}
Then we have
$$
u_1a=u_1,\quad u_1b=u_2,\quad u_2a=u_2,\quad u_2b=-u_1.
$$
Therefore, the $2$-dimensional $\FF_3$-subspace $\la u_1,u_2 \ra$ is a submodule which is generated by
each of the two vectors $u_1$ and $u_2$.

\end{proof}

\section{The group}

We introduce four $17\!\times\!17$-matrices written in the block form as follows:
\begin{align*} \label{gens}
&A=\diag(1,a,a,a,a),\qquad B=\diag(1,b,b,b,b), \\[5pt]
&C=\left(\begin{array}{c|cccc}
1&.&.&.&.      \\
\hline
.&1^{\vphantom{A^A}}&c_1&c_3&.      \\
.&.&1&.&c_4      \\
.&.&.&1&c_2  \\
.&.&.&.&1  \\
        \end{array}
       \right),
   \quad
D=\left(
        \begin{array}{c|cccc}
1 &d& .& .& . \\
\hline
. &1^{\vphantom{A^A}}& .& .& . \\
. &.& 1& .&. \\
. &.& .& 1& . \\
. &.& .& .& 1 \\
        \end{array}
       \right),
\end{align*}
where $d=(1,0,0,0)\in \FF_3^4$, and
\be\label{cdef}
c_1=b, \quad c_2=b^3, \quad c_3=b^2, \quad c_4=-b^2.
\ee

\begin{prop}\label{mp} The group $G=\la A,B,C,D \ra$ is a solvable group of order $2^2\cdot 3^{24}\cdot 5 $
isospectral to $\S_4(3)$.
\end{prop}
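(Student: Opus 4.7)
The proof naturally splits into (I) describing the structure of $G$ (yielding $|G|$ and solvability) and (II) determining $\om(G)$.

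For (I), first observe that $F:=\la A,B\ra$ is the block-diagonal embedding of the abstract group (\ref{rep}), so $|F|=20$. The matrices $C,D$ are block upper unitriangular with identity on the diagonal, so they and their $F$-conjugates generate a normal unipotent $3$-subgroup $P$; hence $G=P\sd F$ is solvable. To prove $|P|=3^{24}$, analyze $P$ via its lower central series as a filtered $\FF_3F$-module. Inspecting which block positions can be nonzero in iterated products of conjugates of $C,D$, one finds that $P$ is supported at exactly nine positions: $(1,2),(2,3),(2,4),(3,5),(4,5)$ (from the generators), $(1,3),(1,4),(2,5)$ (from primary commutators), and $(1,5)$ (from triple commutators); notably, position $(3,4)$ is always zero. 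By Lemma~\ref{mac}, each such position carries an $F$-submodule isomorphic to $V$; sorting by commutator layer: $P/[P,P]\cong V\oplus V$ (one $V$ from $D$ at $(1,2)$; one from the cyclic $F$-submodule generated by $C$'s image, sitting diagonally in the four $C$-positions as the image of the $F$-map $v\mapsto(b,b^2,-b^2,b^3)$), $[P,P]/[P,[P,P]]\cong 3V$ (one $V$ at each of $(1,3),(1,4),(2,5)$, arising from the nonzero matrix-multiplication bilinear maps $V\otimes V\to V$), and $[P,[P,P]]\cong V$ at $(1,5)$. Summing, $\dim_{\FF_3}P=6\cd 4=24$.

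For (II), decompose each $g\in G$ as $g=g_3g_{3'}$ with commuting parts of coprime order; Hall theory places $g_{3'}$ in a conjugate of $F$, so $|g_{3'}|\in\{1,2,4,5\}$. For $p\in P$, apply Lemma~\ref{x3} viewing $p$ as a $5\times 5$ block-unitriangular matrix with block sizes $1,4,4,4,4$: since position $(3,4)$ (the entry $x_3$) is zero, the first two identities in (\ref{9e}) hold automatically and only $x_1x_2y_3+x_1y_2x_4=0$ remains. Hence $|p|\in\{1,3,9\}$, with $|p|=9$ iff this identity fails. Witnesses for $\{5,9,12\}\subseteq\om(G)$: $A$ has order $5$; an explicit $p\in P$ violating $x_1x_2y_3+x_1y_2x_4=0$ has order $9$; and $B$ times a nontrivial order-$3$ element of $C_P(B)$ (which exists since $C_V(b)=\la v\ra\ne 0$) has order $12$.

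The remaining orders to exclude are $15,18,36,45$ (others such as $8,10,20,27$ are immediately ruled out by $|g_{3'}|\le 5$ and $|g_3|\le 9$). Orders $15,45$ require $C_P(A)\ne 1$; but by the coprime action formula, $|C_P(A)|=\prod_i|C_{V_i}(A)|$ over the six chief factors $V_i\cong V$, each with $a$ acting fixed-point-freely (stated after (\ref{cb2})), so $C_P(A)=1$. Order $36$ requires a $9$-element in $C_P(B)$; since $C_V(b)=\la v\ra$ is $1$-dimensional, the restricted entries force $x_1x_2y_3+x_1y_2x_4=0$ by a short computation. The hard part will be excluding order $18$, i.e., a $9$-element in $C_P(B^2)$: here $C_V(b^2)=\la v,va^2+va^3\ra$ from (\ref{cb2}) is $2$-dimensional and $|C_P(B^2)|=3^{12}$ is nontrivially large, so one must verify by a direct computation that the restricted $2$-dimensional forms of $x_1,x_2,y_2,y_3,x_4$ nonetheless force $x_1x_2y_3+x_1y_2x_4=0$, whence $|p|\le 3$ for all $p\in C_P(B^2)$.
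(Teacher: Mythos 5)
Your skeleton matches the paper's proof quite closely (write $G=P\sd F$, use Lemma~\ref{x3} with the $(3,4)$-block equal to zero to reduce element orders in $P$ to the single condition $d_1(f_1f_4+f_3f_2)\ne 0$, kill order $15$ via fixed-point-free action of $A$ on a series with factors $V$, and kill $18$, hence $36$, inside $C_P(B^2)$), but it has two genuine gaps, and they occur exactly where the specific choice $c_1=b,\ c_2=b^3,\ c_3=b^2,\ c_4=-b^2$ matters. The first is your claim that the commutator layer at position $(2,5)$ is one copy of $V$, which you justify by Lemma~\ref{mac} and by ``nonzero matrix-multiplication bilinear maps $V\otimes V\to V$''. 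Lemma~\ref{mac} only says that the $F$-submodule of $M=\M_4(\FF_3)$ generated by a single matrix in $\la b,b^2,b^3\ra_{\FF_3}$ is an image of $V$; the $(2,5)$-entries, however, are generated by the values of the commutator map, an alternating balanced map $V\times V\to M$, so a priori the module $W$ they generate is a nonzero quotient of $\wedge^2V\cong V\oplus U$ (Lemma~\ref{wedge}) and could be $U$, $V$, or $V\oplus U$; writing ``$V\otimes V\to V$'' presupposes the conclusion. The paper must compute explicitly that $\tilde\vf$ vanishes on the generator of $U$ to conclude $W\cong V$, and this is not a formality: since $a$ acts trivially on $U$, if the $U$-constituent survived the order would not be $2^2\cdot 3^{24}\cdot 5$ and, worse, $G$ would contain elements of order $15$. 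So both your dimension count $\dim_{\FF_3}P=24$ and your ``six chief factors $V_i\cong V$'' step rest on an unproved (and choice-dependent) assertion. (Positions $(1,3),(1,4),(1,5)$ are unproblematic, since there the ambient block is already a $4$-dimensional copy of $V$.)

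The second gap is that you explicitly defer ``the hard part'', the non-existence of elements of order $18$, to an unperformed ``direct computation''. That computation is the other substantive half of the proof. In the paper it involves: showing, via the $F$-equivariant identification (\ref{isom}) of $\la C\ra^F$, that the $f_i$-components of an element of $C_P(B^2)$ have the form $u\a_i$ for a single $u\in C_V(b^2)$; reducing the verification of $w\psi(u)=0$, where $\psi(u)=u\a_1\cdot u\a_4+u\a_3\cdot u\a_2$, to the two generators $\psi(v)$ and $\psi(va^2+va^3)$ of the $\FF_3\la b\ra$-module generated by $\Im\psi$ (a symmetric-square argument); and computing that both values are left-divisible by $1-b^2$, which annihilates them against any $w\in C_V(b^2)$ because $w(1-b^2)=0$. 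None of this is routine bookkeeping, and again it depends on the particular $c_i$. Two smaller points: you never exhibit the order-$9$ witness, which requires observing $c_1c_4+c_3c_2=b-b^3\ne 0$ (your order-$12$ witness $BD$ is fine and is more explicit than the paper's appeal to $\mu(V\sd F)=\{5,12\}$); and your separate exclusion of $36$ via $C_P(B)$ is redundant, since $18$ divides $36$.
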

\begin{proof}
Clearly, $G\cong P\sd F$, where $P=\la C,D \ra^G$ is the largest normal $3$-subgroup of $G$ and $F$ is
the above-defined Frobenius group of shape $5\!:\!4$ which we identify with $\la A,B \ra$.
In order to determine $\om(G)$, we will study more closely the structure of $P$ and the action of $F$ on $P$.

Each element of $P$ has the form
\be\label{form}
\left(
        \begin{array}{c|cccc}
1 &d_1& d_2& d_3& d_4 \\
\hline
. &1^{\vphantom{A^A}}& f_1& f_3 & h \\
. &.& 1& .& f_4 \\
. &.& .& 1& f_2 \\
. &.& .& .& 1 \\
        \end{array}
       \right)
\ee
for some $d_i\in \FF_3^4$, $f_i,h\in \M_4(\FF_3)$, $i=1,\ldots,4$.

First, we observe that $P$ has exponent $9$.
Indeed, the exponent is at most $9$ by Lemma \ref{x3}. \big(In order
to apply Lemma
\ref{x3}, we may naturally embed $P$ into the upper unitriangular $5\!\times\!5$-matrix group over
$\M_4(\FF_3)$.\big) Moreover, Lemma \ref{x3} implies that the element (\ref{form})
has order $9$ if and only if
\be\label{o9}
d_1(f_1f_4+f_3f_2)\ne 0.
\ee
Thus, there is an element of order $9$ of the form $CD_1$, where $D_1\in
\la D\ra^F$. Indeed, such an element has order $9$ if and only if $0\ne
d_1(c_1c_4+c_3c_2)=d_1(b-b^3)$, where $d_1\in
\FF_3^4$ is the block of $D_1$ in the position $(1,2)$. Since $\la D\ra^F$ is isomorphic to $V=\FF_3^4$ as an
$\FF_3F$-module, $d_1$ can be chosen arbitrarily, and we choose it so that it is not annihilated by the nonzero
matrix $b-b^3$.

We now prove that $P$ has an $F$-invariant normal series with factors isomorphic to $V$ as $\FF_3F$-modules.
 This will imply that $G$ has no elements of order $15$.

It can be seen from (\ref{form}) that
$P=\la D \ra^G\sd\la C \ra^F$, where
the group $\la D \ra^G$ consists of the matrices (\ref{form}) with $f_i,h=0$
and is isomorphic to $V^{\oplus 4}$ as an $\FF_3F$-module; whereas the group
$\la C \ra^F$ consists of the matrices (\ref{form}) with $d_i=0$.
Observe that the components $f_i$ and $h$ in
(\ref{form}) cannot be chosen arbitrarily. If we identify an element of $\la C \ra^F$ with the tuple
$(f_1,f_2,f_3,f_4,h)$, we have
\be\label{multc}
\begin{array}{r@{}l}
(f_1,f_2,f_3,f_4,h)\cdot(f_1',f_2',f_3',f_4',h')=&\\[3pt]
(f_1+f_1',\ f_2+f_2',\ f_3+f_3',\ f_4+f_4',\ &h+h'+f_1f_4'+f_2f_3').
\end{array}
\ee
By Lemma \ref{mac} the maps $\a_i:v\mapsto c_i$, $i=1,\ldots,4$ can be extended to $\FF_3F$-module
homomorphisms $V\to M$.
Let $T$ be subgroup of the group $\{(u,m)\mid u\in V,m\in M\}$, with the multiplication
$$
(u,m)\cdot (u',m')=(u+u',m+m'+u\a_1\cdot u'\a_4+u\a_3\cdot u'\a_2)
$$
and componentwise action of $F$, generated by the element $(v,0)$ as an $F$-group;
i.\,e. $T=\big\la(v,0)\big\ra^F$. Then there is an $F$-group isomorphism $\a: T\to \la C \ra^F$
(meaning that $\a$ commutes with the action of $F$)
defined by the map
\be\label{isom}
\a:(v,0)\mapsto (v\a_1,v\a_2,v\a_3,v\a_4,0)=C.
\ee

Observe that $\big[(u,m),(u',m')\big]=(0,\vf(u,u'))$, where
$$
\vf(u,u')=u\a_1\cdot u'\a_4 - u'\a_1\cdot u \a_4 +u\a_3\cdot u'\a_2- u'\a_3\cdot u\a_2
$$
defines a map $\vf: V\times V\to M$ whose image generates a nonzero $\FF_3F$-module $W$ isomorphic to the derived
subgroup of $\la C\ra^F$. Clearly, $\vf$ is a balanced map of $\FF_3F$-modules, hence $W$ is a homomorphic image
of $V\otimes V$ under $\tilde\vf$ as in (\ref{diag}). Moreover, $\tilde\vf(u\otimes u)=\vf(u,u)=0$ for all $u\in V$,
thus $W$ is a homomorphic image of $\wedge^2 V$.
Using (\ref{rels}), we also have
\begin{align*}
\tilde\vf(v\wedge va\,+&\,v\wedge va^3+va^2\wedge va^3)=
\vf(v,va)+\vf(v,va^3)+\vf(va^2,va^3)=\\
&\ \ \ \, b\cdot(-b^2)^a-b^a\cdot(-b^2)+b^2\cdot(b^3)^a-(b^2)^a\cdot b^3\\
&+b\cdot(-b^2)^{a^3}-b^{a^3}\cdot(-b^2)+b^2\cdot(b^3)^{a^3}-(b^2)^{a^3}\cdot b^3\\
&+b^{a^2}\cdot(-b^2)^{a^3}-b^{a^3}\cdot(-b^2)^{a^2}+(b^2)^{a^2}\cdot(b^3)^{a^3}-(b^2)^{a^3}\cdot (b^3)^{a^2}=\\
&\hspace{100pt}-b^3a^2+b^3a+ba^3-ba\\
&\hspace{100pt}-b^3a+b^3a^3+ba^4-ba^3\\
&\hspace{100pt}-b^3a^3+b^3a^2+ba-ba^4=0.
\end{align*}
Consequently, $\tilde\vf(U)=0$, where $U$ is the $2$-dimensional submodule  of $\wedge^2V$ from Lemma \ref{wedge},
and we thus have $W\cong V$. This proves in particular that $\la C \ra^F$ has nilpotency
class $2$ and order $3^8$. Therefore, $P$ has order $3^{16}\cdot 3^8$ and $A$ acts
fixed-point freely on $P$.

It remains to prove that $G$ has no elements of order $18$. By the Schur--Zassenhaus theorem,
all involutions of $G$ of are conjugate to $B^2$. We will show that $C_P(B^2)$ contains no elements of order $9$.
Every element of $C_P(B^2)$ has the form (\ref{form}) with $d_i\in C_V(b^2)$ and $f_i,h\in C_M(b^2)$.
Moreover, due to the $F$-group isomorphism (\ref{isom}) the components $f_i$ of such an element
must be of the form $u\a_i$ for some $u\in C_V(b^2)$. Thus, in view of the condition (\ref{o9}),
we need only show that
\be\label{lk}
w\psi(u)=0
\ee
for all $w,u\in C_V(b^2)$, where $\psi: C_V(b^2)\to M$ is given by
$$
\psi(u)=u\a_1\cdot u\a_4+u\a_3\cdot u\a_2.
$$
Let $N$ be the $\FF_3\la b\ra$-module generated by $\Im\psi$. Observe that $C_V(b^2)$ is $\la b\ra$-invariant.
Hence, the condition (\ref{lk}) has to be checked only when $\psi(u)$ is a generator of $N$.
Using (\ref{cb2}) it can be shown that $N$ is generated as a module by $\psi(v)$ and $\psi(va^2+va^3)$.
\big(Indeed, the map $\psi$ is quadratic in $u$, hence $N$ is a homomorphic image of the symmetric square
of  $C_V(b^2)$ which can be generated by two elements.\big) By (\ref{rels}) and (\ref{az}), we have
$$
\psi(v)=b\cdot(-b^2)+b^2\cdot b^3=(1-b^2)b,
$$
and
\begin{align*}
\psi(va^2&+va^3)=\\
&(b^{a^2}+b^{a^3})((-b^2)^{a^2}+(-b^2)^{a^3})+((b^2)^{a^2}+(b^2)^{a^3})((b^3)^{a^2}+(b^3)^{a^3})=\\
-&(ba^3+ba^2)(b^2a^4+b^2a)+(b^2a^4+b^2a)(b^3a+b^3a^4)=\\
-&b^3(a+a^2+a^3+a^4)+b(a+a^2+a^3+a^4)=-(1-b^2)b.
\end{align*}
Thus, in both cases, $\psi(u)$ is divisible from the left by $1-b^2$. However, if $w\in C_V(b^2)$,
we have $w(1-b^2)=0$; therefore, (\ref{lk}) holds as is required.
\end{proof}

Theorem \ref{main} is now a consequence of Proposition \ref{mp}.

{\em Acknowledgement.} The author is thankful to Prof. V. D. Mazurov for discussing the
problem and for the remarks about this paper.

\end{document}